\documentclass[]{article}
\usepackage[utf8]{inputenc}
\usepackage{amsmath,amssymb,amsthm,mathtools}
\usepackage{graphicx}
\usepackage[mathscr]{euscript}

\newtheorem{thm}{Theorem}

\newtheorem{cor}{Corollary}
\newtheorem{lem}{Lemma}
\newtheorem{prop}{Proposition}

\newtheorem*{thm*}{Theorem}

\theoremstyle{definition}
\newtheorem{defn}{Definition}

\title{On regular families of cardinal interpolators and multiresolution analyses}
\author{Jeff Ledford}
\date{June 2014}

\begin{document}

\maketitle

\begin{abstract}
In this short note, we investigate the relationship between so-called regular families of cardinal interpolators and multiresolution analyses.  We focus our studies on examples of regular families of cardinal interpolators whose Fourier transform is unbounded at the origin.  In particular, we show that when this is the case there is a multiresolution analysis corresponding to each member of a regular family of cardinal interpolators.  
\begin{keywords}
{\it multiresolution analysis, splines, multiquadric}
\end{keywords}

\end{abstract}


\section{Introduction}
This paper grew out of exploring the connections between polyharmonic splines and multiquadrics.  Some of this connection has been detailed in \cite{me}, where similar $L^2$ convergence properties are exhibited for the associated fundamental functions of interpolation.  A result concerning general $L^p$, $1<p<\infty$ results may be found in \cite{me2}.  In \cite{Madych}, it was shown that polyharmonic splines may be used to generate multiresolution analyses.  We show that the same is true for some examples of regular families of cardinal interpolators.

This paper is organized as follows, th next section contains definitions and basic facts, while in Section 3 we construct multiresolution analyses.  We collect examples and remarks in Section 4.


\section{Preliminaries}

The purpose of this section is to present the reader with several definitions and basic facts which will be used to prove the main result.  Since our calculations frequently occur in the frequency domain, we begin with the Fourier transform.

\begin{defn}
For a function $f\in C^{\infty}(\mathbb{R}^n)$, and multi-indices $\alpha, \beta$, let
\[
\| f \|_{\alpha,\beta} = \sup_{x\in \mathbb{R}^n}|x^\alpha D^\beta f(x)  |.
\]
We define the \emph{Schwartz space}, denoted $\mathscr{S}$, by
\[
\mathscr{S}=\left\{ f\in C^{\infty}(\mathbb{R}^n): \| f \|_{\alpha,\beta}<\infty \text{ for all } \alpha,\beta  \right\}.
\]
\end{defn}

\begin{defn} If $f\in \mathscr{S}$, we define its \emph{Fourier transform}, denoted $\hat{f}(\xi)$, to be:
\begin{equation}\label{FT definition}
\hat{f}(\xi)=\int_{\mathbb{R}^n}f(x)e^{-i\langle \xi,x \rangle}dx,
\end{equation}
where $\langle \xi,x\rangle=\sum_{j=1}^{n}\xi_j x_j$.
\end{defn}

We may extend the Fourier transform to the dual of $\mathscr{S}$, denoted $\mathscr{S}'$, in the usual way.  That is, if $f\in \mathscr{S}'$, then $\hat f$ satisfies $\langle \hat f, \varphi \rangle = \langle f, \hat\varphi  \rangle$ for all $\varphi \in \mathscr{S}$.  If $f$ is a function of at most polynomial growth we say $f$ is \emph{slowly increasing}, and note that $f$ may be identified with $f\in\mathscr{S}'$ by the formula $\langle f, \varphi \rangle = \int_{\mathbb{R}^n} \varphi(x) f(x)dx$, where $dx$ is the Lebesgue measure on $\mathbb{R}^n$. 

On the Hilbert space $L^2(\mathbb{R}^n)$, this convention leads to the isometry $f\mapsto (2\pi)^{-n/2}\hat f$.  Plancherel's theorem provides us with the inner product identity 
\begin{equation}\label{inner product}
\langle f, g \rangle =  (2\pi)^{-n}\langle \hat f , \hat g \rangle ,
\end{equation}
where $\langle f ,g \rangle =\int_{\mathbb{R}^n}f(x)\overline{g(x)}dx$. 

Our results concern functions known as cardinal interpolators and related concepts.  These were introduced in \cite{me} and details may be found there.

\begin{defn}  We say that a function $\phi$ is a \emph{cardinal interpolator} if it satisfies the following conditions:
\begin{enumerate}
\item[(H1)] $\phi$ is a real valued slowly increasing function on $\mathbb{R}^n$;
\item[(H2)] $\hat{{\phi}}(\xi) \geq 0$ and $\hat{{\phi}}(\xi)\geq \delta >0$ in $[-\pi,\pi]^n$;
\item[(H3)] $\hat{{\phi}}\in C^{n+1}(\mathbb{R}^n\setminus\{0\})$;
\item[(H4)] There exists $\epsilon >0$ such that if $|\alpha|\leq n+1$,  $D^{\alpha}\hat\phi(\xi)=O(\| \xi \|^{-(n+\epsilon)})$ as $\|\xi\|\to\infty$;
\item[(H5)] for any multi-index $\alpha$, with $|\alpha|\leq n+1$, 
\[
\dfrac{\displaystyle\prod_{j=1}^{|\alpha|}D^{\alpha_j}\hat\phi  }     {\hat\phi^{|\alpha|+1}} \in L^{\infty}([-\pi,\pi]^n), \quad\text{where}\quad \sum_{j=1}^{|\alpha|}\alpha_j = \alpha.
\] 
\end{enumerate}
\end{defn}

\begin{defn}
Given a cardinal interpolator $\phi$, we define the \emph{fundamental function} of interpolation, denoted $L_\phi$, by its Fourier transform
\begin{equation}\label{fundamental function}
\hat{L}_\phi(\xi)=\hat{\phi}(\xi)\left[\sum_{j\in\mathbb{Z}^n}\hat{\phi}(\xi-2\pi j)\right]^{-1}.
\end{equation}
\end{defn}

\begin{defn}
We call a family of functions $\{\phi_{\alpha}:\alpha\in A\}$ a \emph{regular family of cardinal interpolators} if for each $\alpha\in A\subset(0,\infty)$, $\phi_{\alpha}$ is a cardinal interpolator and in addition to this, we have:
\begin{enumerate}
\item[(R1)] for $j\in\mathbb{Z}^n\setminus\{0\}$ and $\xi\in[-\pi,\pi]^n$, define $M_{j,\alpha}(\xi)=\displaystyle\dfrac{\hat{\phi}_\alpha(\xi+2\pi j)}{\hat{\phi}_\alpha(\xi)} $, then $\displaystyle\lim_{\alpha\to\infty}M_{j,\alpha}(\xi)=0$ for almost every $\xi\in[-\pi,\pi]^n$;
\item[(R2)] there exists $\{M_j\}\in l^1(\mathbb{Z}^n\setminus\{0\})$, independent of $\alpha$, such that for all $j\in\mathbb{Z}^n\setminus\{0\}$, $M_{j,\alpha}(\xi)\leq M_j$ for almost every $\xi\in[-\pi,\pi]^n$.
\end{enumerate}
\end{defn}
 
The following lemma is a straightforward consequence of these definitions.

\begin{lem}
If $\{\phi_\alpha:\alpha\in A\}$ is a regular family of cardinal interpolators, then 
\begin{equation}\label{bound}
 1\leq \left[{\displaystyle\sum_{j\in\mathbb{Z}^n}\hat{\phi}_\alpha(\xi-2\pi j)     }\right]{\left[\displaystyle\sum_{j\in\mathbb{Z}^n}\hat{\phi}^2_\alpha(\xi-2\pi j) \right]^{-1/2}} \leq 1+C 
\end{equation}
for all $\xi\in [-\pi,\pi]^n$, where $C>0$ is independent of $\alpha\in A$.
\end{lem}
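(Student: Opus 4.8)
The plan is to reduce everything to elementary manipulations of the nonnegative sequence $a_j:=\hat\phi_\alpha(\xi-2\pi j)$, $j\in\mathbb{Z}^n$, proving the two inequalities of \eqref{bound} separately. Fix $\xi\in[-\pi,\pi]^n$ and $\alpha\in A$; by (H2) the central term $a_0=\hat\phi_\alpha(\xi)$ satisfies $a_0\geq\delta>0$, so both series in \eqref{bound} are strictly positive and the ratio is well defined.

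For the left-hand inequality I would just expand a square: since $a_j\geq 0$ for all $j$ by (H2),
\[
\left(\sum_{j\in\mathbb{Z}^n}a_j\right)^2=\sum_{j\in\mathbb{Z}^n}a_j^2+\sum_{j\neq k}a_j a_k\geq\sum_{j\in\mathbb{Z}^n}a_j^2,
\]
which rearranges to the asserted lower bound of $1$. This is nothing more than the inequality $\|\,\cdot\,\|_{l^2}\leq\|\,\cdot\,\|_{l^1}$ for nonnegative sequences and uses none of the regularity hypotheses.

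For the right-hand inequality the idea is to factor out the central term $a_0$ and identify the remaining sum via the functions $M_{j,\alpha}$. Since $\hat\phi_\alpha(\xi-2\pi j)/\hat\phi_\alpha(\xi)=M_{-j,\alpha}(\xi)$ and $-j$ ranges over $\mathbb{Z}^n\setminus\{0\}$ together with $j$, condition (R2) gives, for a.e.\ $\xi\in[-\pi,\pi]^n$,
\[
\sum_{j\in\mathbb{Z}^n}a_j=\hat\phi_\alpha(\xi)\left(1+\sum_{j\neq 0}M_{j,\alpha}(\xi)\right)\leq\hat\phi_\alpha(\xi)\Bigl(1+\sum_{j\neq 0}M_j\Bigr),
\]
where $C:=\sum_{j\neq 0}M_j<\infty$ by hypothesis and, crucially, is independent of $\alpha$. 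On the other hand, retaining only the $j=0$ term in the sum of squares gives $\bigl[\sum_{j\in\mathbb{Z}^n}a_j^2\bigr]^{1/2}\geq\hat\phi_\alpha(\xi)$. Dividing these two estimates cancels $\hat\phi_\alpha(\xi)$ and leaves the upper bound $1+C$.

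The only point requiring a little care — and the closest thing to an obstacle — is that (R1)--(R2) are stated for almost every $\xi$, while \eqref{bound} is claimed for every $\xi\in[-\pi,\pi]^n$, including the origin where $\hat\phi_\alpha$ may be unbounded. I would resolve this by noting that, away from $\xi=0$, (H3)--(H4) make the two series in \eqref{bound} continuous functions of $\xi$ and the summable majorant $\{M_j\}$ dominates the tail uniformly, so the a.e.\ inequalities propagate to all $\xi\neq 0$ by continuity; the value at $\xi=0$ is then obtained by letting $\xi\to 0$ (or one simply reads the conclusion in the a.e.\ sense). Apart from this bookkeeping the proof is short: the real content is the factorization that turns (R2) into a uniform bound on the off-center mass.
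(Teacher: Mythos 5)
Your proof is correct and follows essentially the same route as the paper: the lower bound from nonnegativity (H2), and the upper bound by keeping only the $j=0$ term in the sum of squares while controlling the off-center sum by $\hat\phi_\alpha(\xi)\sum_{j\neq 0}M_j$ via (R2), yielding the constant $C=\|\{M_j\}\|_{l^1}$. Your closing remark on passing from the a.e.\ statement of (R2) to all $\xi$ is a bookkeeping point the paper silently skips, and your continuity argument handles it adequately.
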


\begin{proof}
The lower bound is a consequence of (H2).  To see the upper bound, for $\xi\in [-\pi, \pi]^n$ we write
\[
\displaystyle\sum_{j\in\mathbb{Z}^n}\hat{\phi}_\alpha(\xi-2\pi j)  =\hat\phi_\alpha(\xi) + u_\alpha(\xi) ,
\]
where $u_\alpha(\xi)=\sum_{j\neq 0}\hat{\phi}_\alpha(\xi-2\pi j) $.  Using (H2) and (R2), we have
\begin{align*}
&\left[{\displaystyle\sum_{j\in\mathbb{Z}^n}\hat{\phi}_\alpha(\xi-2\pi j)     }\right]{\left[\displaystyle\sum_{j\in\mathbb{Z}^n}\hat{\phi}^2_\alpha(\xi-2\pi j) \right]^{-1/2}} \\
\leq &\left[ 1+ 2u_\alpha(\xi)/\hat\phi_\alpha(\xi) + (u_\alpha(\xi)/\hat\phi_\alpha(\xi))^2  \right]^{1/2} \\
\leq & \left[1+2\|\{M_j\}\|_{l^1}+\|\{M_j\}\|_{l^1}^2\right]^{1/2}=1 + \|\{M_j\}\|_{l^1}.
\end{align*}

\end{proof}

Our goal is to build multiresolution analyses using these functions.  To this end we recall the definition which may be found in \cite{HW}.

\begin{defn}
A \emph{multiresolution analysis} (MRA) consists of a sequence of closed subspaces $V_j, j\in\mathbb{Z}$, of $L^2(\mathbb{R}^n)$ satisfying
\begin{enumerate}
\item for all $j\in \mathbb{Z}$, $V_j\subset V_{j+1}$;
\item for all $j\in\mathbb{Z}$, $f(\cdot)\in V_j$ if and only if $f(2\cdot)\in V_{j+1}$;
\item $\displaystyle\bigcap_{j\in\mathbb{Z}}V_j= \{ 0 \}$;
\item $\displaystyle\bigcup_{j\in\mathbb{Z}}V_j$ is dense in $L^2(\mathbb{R}^n)$;
\item there exists a function $\varphi\in V_0$, such that $\{ \varphi(\cdot-j): j\in\mathbb{Z}^n  \}$ is an orthonormal basis for $V_0$.
\end{enumerate}
\end{defn}


\section{Multiresultion Analysis}

Our construction starts by considering a regular family of cardinal interpolators $\{\phi_\alpha: \alpha \in A\}$.  To each member $\phi_\alpha$, we associate the space:
\begin{equation}\label{V0}
V_0(\phi_\alpha)=\left\{ \sum_{j\in\mathbb{Z}^n}a_j L_{\phi_\alpha}(x-j): \{a_j\}\in l^2  \right\},
\end{equation}
where $L_{\phi_\alpha}$ is the fundamental function defined in \eqref{fundamental function}.
Our first result shows that this is a closed subspace of $L^2(\mathbb{R}^n)$.
\begin{prop}
Suppose that $\{ \phi_\alpha : \alpha\in A \}$ is a regular family of cardinal interpolators and $V_0(\phi_\alpha)$ is defined by \eqref{V0}, then $V_0(\phi_\alpha)$ is a subspace of $L^2(\mathbb{R}^n)$, additionally, if $\{f_k: k\in\mathbb{N}\}\subset V_0(\phi_\alpha)$ is a Cauchy sequence, the corresponding coefficient sequences form a Cauchy sequence in $l^2(\mathbb{Z}^n)$.
\end{prop}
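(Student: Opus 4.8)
The plan is to pass to the frequency side and reduce the whole statement to a two‑sided norm estimate obtained from the periodization of $|\hat L_{\phi_\alpha}|^2$. Fix $\alpha$ and, for a \emph{finitely supported} sequence $\{a_j\}$, put $m_a(\xi)=\sum_{j\in\mathbb{Z}^n}a_j e^{-i\langle j,\xi\rangle}$, so that $f=\sum_j a_j L_{\phi_\alpha}(\cdot-j)$ has $\hat f(\xi)=m_a(\xi)\hat L_{\phi_\alpha}(\xi)$. Using Plancherel \eqref{inner product}, tiling $\mathbb{R}^n$ by the cubes $[-\pi,\pi]^n+2\pi j$, and the $2\pi\mathbb{Z}^n$‑periodicity of $m_a$, one gets
\[
\|f\|_{L^2}^2=(2\pi)^{-n}\int_{[-\pi,\pi]^n}|m_a(\xi)|^2\,\Phi_\alpha(\xi)\,d\xi,\qquad \Phi_\alpha(\xi):=\sum_{j\in\mathbb{Z}^n}\bigl|\hat L_{\phi_\alpha}(\xi-2\pi j)\bigr|^2 .
\]
So the first step is to show that $\Phi_\alpha$ is bounded above and below by positive constants, uniformly in $\alpha\in A$.

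For that step, write $\sigma_\alpha(\xi)=\sum_{k\in\mathbb{Z}^n}\hat\phi_\alpha(\xi-2\pi k)$, which is $2\pi\mathbb{Z}^n$‑periodic, finite by (H4), and bounded below by $\delta>0$ on $[-\pi,\pi]^n$ by (H2). Then, from \eqref{fundamental function} and periodicity of $\sigma_\alpha$, we have $\hat L_{\phi_\alpha}(\xi-2\pi j)=\hat\phi_\alpha(\xi-2\pi j)/\sigma_\alpha(\xi)$ for every $j$, whence
\[
\Phi_\alpha(\xi)=\sigma_\alpha(\xi)^{-2}\sum_{j\in\mathbb{Z}^n}\hat\phi_\alpha^2(\xi-2\pi j).
\]
This is exactly the reciprocal square of the quantity estimated in the preceding Lemma, so \eqref{bound} gives $(1+C)^{-2}\le\Phi_\alpha(\xi)\le 1$ on $[-\pi,\pi]^n$, with $C$ independent of $\alpha$. (In particular, taking $\{a_j\}$ to be a single unit vector shows $L_{\phi_\alpha}\in L^2(\mathbb{R}^n)$, since $\int_{\mathbb{R}^n}|\hat L_{\phi_\alpha}|^2=\int_{[-\pi,\pi]^n}\Phi_\alpha\le(2\pi)^n$.) Combining the displayed identity for $\|f\|_{L^2}^2$ with this bound and Parseval's identity $(2\pi)^{-n}\int_{[-\pi,\pi]^n}|m_a(\xi)|^2\,d\xi=\|\{a_j\}\|_{l^2}^2$ yields
\[
(1+C)^{-1}\,\|\{a_j\}\|_{l^2}\ \le\ \|f\|_{L^2}\ \le\ \|\{a_j\}\|_{l^2}
\]
for all finitely supported $\{a_j\}$.

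The remainder is formal. The upper bound says the linear map $T:\{a_j\}\mapsto\sum_j a_j L_{\phi_\alpha}(\cdot-j)$, a priori defined on the dense subspace of finitely supported sequences, is bounded from $l^2(\mathbb{Z}^n)$ into $L^2(\mathbb{R}^n)$; it therefore extends continuously to all of $l^2(\mathbb{Z}^n)$, and this extension is precisely the series in \eqref{V0} read as the $L^2$‑limit of its partial sums (the partial sums are Cauchy in $L^2$ by applying the upper bound to tails). Hence $V_0(\phi_\alpha)=T\bigl(l^2(\mathbb{Z}^n)\bigr)\subset L^2(\mathbb{R}^n)$, and it is a linear subspace because the coefficient sequence of a linear combination is the corresponding linear combination of coefficient sequences. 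Both inequalities persist for $T$ by continuity, so if $\{f_k\}\subset V_0(\phi_\alpha)$ is Cauchy in $L^2$ with coefficient sequences $\{a^{(k)}\}$, the lower bound applied to $f_k-f_m$ gives $\|a^{(k)}-a^{(m)}\|_{l^2}\le(1+C)\|f_k-f_m\|_{L^2}$, so $\{a^{(k)}\}$ is Cauchy in $l^2$; together with completeness of $l^2$ and continuity of $T$ this also gives closedness of $V_0(\phi_\alpha)$.

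I expect the genuine content to be concentrated in the first two steps: noticing that the periodicity of $\sigma_\alpha$ collapses $\Phi_\alpha$ into exactly the ratio controlled by \eqref{bound}, and being careful enough about the convergence and rearrangement of the series (unconditional $L^2$‑convergence, validity of the tiling computation) that the Fourier‑side identity is justified for every $\{a_j\}\in l^2$ rather than only for finitely supported sequences. The uniformity in $\alpha$ requires no extra work, since the constant $C$ in \eqref{bound} is already independent of $\alpha$.
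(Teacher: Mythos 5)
Your proof is correct, and its core is the same computation the paper makes: pass to the Fourier side, use Plancherel and the $2\pi$-periodicity of the symbol to collapse the integral onto $[-\pi,\pi]^n$ against the periodization $\sum_j|\hat L_{\phi_\alpha}(\xi-2\pi j)|^2$, and recognize this as the reciprocal square of the quantity controlled by Lemma 1, giving constants independent of $\alpha$. The one genuine difference is in how the first assertion is handled: the paper simply cites Theorem 1 of \cite{me} for the fact that $V_0(\phi_\alpha)\subset L^2(\mathbb{R}^n)$, and then proves only the lower bound (applied directly to $f_j-f_k$ with $l^2$ coefficients, via Tonelli) to get the Cauchy property of the coefficients; you instead prove the two-sided estimate $(1+C)^{-1}\|\{a_j\}\|_{l^2}\le\|f\|_{L^2}\le\|\{a_j\}\|_{l^2}$ for finitely supported sequences and extend by density, which makes the subspace claim (and in fact closedness, i.e.\ the Corollary) self-contained without the external reference. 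Both routes are sound; yours buys independence from \cite{me} at the cost of the extension argument, while the paper's is shorter given the citation. One cosmetic caution: you use $\Phi_\alpha$ for the periodization $\sum_j|\hat L_{\phi_\alpha}(\xi-2\pi j)|^2$, but the paper reserves $\Phi_\alpha$ for the scaling function defined in \eqref{scale}; rename your auxiliary function to avoid a clash.
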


\begin{proof}
That $V_0(\phi_\alpha)$ is a subspace follows from Theorem 1 in \cite{me}.  To see that the Cauchy criterion is shared by the coefficient sequences, we adopt the notation $\{a_j(f):j\in l^2\}$ for the coefficients of $f\in V_0(\phi_\alpha)$ and observe that
\begin{align*}
&\| f_j - f_k \|_{L^2(\mathbb{R}^n)}^2 = (2\pi)^{-n}\| \hat L_{\phi_\alpha} \sum_{l\in \mathbb{Z}^n}\left(a_l(f_j)-a_l(f_k)\right)e^{-i\langle \cdot, l \rangle}     \|_{L^2(\mathbb{R}^n)}^2\\
=& (2\pi)^{-n}\int_{\mathbb{R}^n}\left|\hat L_{\phi_\alpha}(\xi)\sum_{l\in \mathbb{Z}^n}\left(a_l(f_j)-a_l(f_k)\right)e^{-i\langle \xi, l \rangle} \right|^2d\xi \\
=& (2\pi)^{-n}\int_{[-\pi,\pi]^n} \sum_{m\in\mathbb{Z}^n}\left| \hat L_{\phi_\alpha}(\xi - 2\pi m)   \right|^2\left| \sum_{l\in \mathbb{Z}^n}\left(a_l(f_j)-a_l(f_k)\right)e^{-i\langle \xi, l \rangle}    \right|^2d\xi \\
\geq & (2\pi)^{-n}(1+C)^{-2} \int_{[-\pi,\pi]^n} \left| \sum_{l\in \mathbb{Z}^n}\left(a_l(f_j)-a_l(f_k)\right)e^{-i\langle \xi, l \rangle}    \right|^2d\xi\\
=&(1+C)^{-2}\sum_{l\in\mathbb{Z}^n}| a_l(f_j)-a_l(f_k)  |^2,
\end{align*}
where we have used Plancherel's theorem in the first line, Tonelli's theorem together with periodicity in the third, \eqref{bound} in the fourth, and finally, Parseval's theorem in the last line.  
\end{proof}

\begin{cor}
$V_0(\phi_\alpha)$ is a closed subspace of $L^2(\mathbb{R}^n)$. 
\end{cor}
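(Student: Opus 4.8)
The plan is to combine the Proposition with the completeness of $l^2(\mathbb{Z}^n)$. Let $\{f_k\}\subset V_0(\phi_\alpha)$ converge in $L^2(\mathbb{R}^n)$ to some $f$; since a convergent sequence is Cauchy, the Proposition shows that the coefficient sequences $\{a_j(f_k):j\in\mathbb{Z}^n\}$ form a Cauchy sequence in $l^2(\mathbb{Z}^n)$. By completeness of $l^2(\mathbb{Z}^n)$ there is a limit $\{a_j\}\in l^2(\mathbb{Z}^n)$, and I would set $g=\sum_{j\in\mathbb{Z}^n}a_j L_{\phi_\alpha}(\cdot-j)$, which lies in $V_0(\phi_\alpha)$ by the definition \eqref{V0}. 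It then suffices to check that $f_k\to g$ in $L^2(\mathbb{R}^n)$, since uniqueness of $L^2$ limits then forces $f=g\in V_0(\phi_\alpha)$, so $V_0(\phi_\alpha)$ contains all its limit points.

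For the convergence $f_k\to g$ I would reuse the chain of identities from the proof of the Proposition, now reading off an upper bound rather than a lower one. Applying the first three lines of that computation to the pair $(f_k,g)$ gives
\[
\|f_k-g\|_{L^2(\mathbb{R}^n)}^2=(2\pi)^{-n}\int_{[-\pi,\pi]^n}\left(\sum_{m\in\mathbb{Z}^n}|\hat L_{\phi_\alpha}(\xi-2\pi m)|^2\right)\left|\sum_{l\in\mathbb{Z}^n}(a_l(f_k)-a_l)e^{-i\langle\xi,l\rangle}\right|^2d\xi.
\]
Using the periodicity of the denominator in \eqref{fundamental function}, one has $\sum_{m}|\hat L_{\phi_\alpha}(\xi-2\pi m)|^2=\big(\sum_{m}\hat\phi_\alpha^2(\xi-2\pi m)\big)\big(\sum_{j}\hat\phi_\alpha(\xi-2\pi j)\big)^{-2}$, and squaring \eqref{bound} and taking reciprocals shows this quantity is at most $1$. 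Hence by Parseval's theorem $\|f_k-g\|_{L^2(\mathbb{R}^n)}^2\leq\sum_{l\in\mathbb{Z}^n}|a_l(f_k)-a_l|^2\to 0$, which is the required convergence.

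The only genuinely new ingredient beyond the Proposition is the uniform estimate $\sum_{m}|\hat L_{\phi_\alpha}(\xi-2\pi m)|^2\leq 1$, and I expect recognizing that the left side is the reciprocal of the square of the quantity controlled in \eqref{bound} to be the main (and only mildly nontrivial) step; the remainder is bookkeeping with $l^2$ completeness and uniqueness of limits in $L^2(\mathbb{R}^n)$.
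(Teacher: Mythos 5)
Your proof is correct and follows exactly the route the paper intends: the Corollary is stated as an immediate consequence of the Proposition, namely that a convergent (hence Cauchy) sequence in $V_0(\phi_\alpha)$ has Cauchy coefficient sequences, whose $l^2$ limit defines the limit function in $V_0(\phi_\alpha)$. Your added observation that $\sum_{m}|\hat L_{\phi_\alpha}(\xi-2\pi m)|^2\leq 1$ (from the lower bound in \eqref{bound}) is precisely the estimate needed to pass back from $l^2$ convergence of coefficients to $L^2$ convergence of the functions, and it is established correctly.
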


Motivated by the treatment in \cite{Madych}, we introduce the scaling function $\Phi_\alpha$, which is defined by its Fourier transform,
\begin{equation}\label{scale}
\hat \Phi_\alpha (\xi) = \hat\phi_{\alpha}(\xi) \left[ \sum_{k\in\mathbb{Z}^n}\hat \phi_{\alpha}^2(\xi-2\pi k)     \right]^{-1/2},
\end{equation}
where $\xi\in \mathbb{R}^n$, and extended by continuity to the origin in the case that $\hat\phi_\alpha$ is unbounded there.  We remark that in this case we get $\hat \Phi_\alpha (0)=1$.  This turns out to be a convenient basis for $V_0(\phi_\alpha)$.

\begin{prop}
If $\Phi_\alpha$ is defined by \eqref{scale}, then $\{\Phi_\alpha(\cdot-j):j\in\mathbb{Z}^n\}$ is a complete orthonormal system in $V_0(\phi_\alpha)$.
\end{prop}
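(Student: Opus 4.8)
The plan is to verify the two claims — orthonormality of the integer translates of $\Phi_\alpha$, and completeness of this system in $V_0(\phi_\alpha)$ — by passing to the Fourier side, where both reduce to statements about the periodization of $|\hat\Phi_\alpha|^2$.

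First I would establish orthonormality. By Plancherel and the standard periodization argument (the same Tonelli-plus-periodicity manipulation used in the previous proposition), $\langle \Phi_\alpha(\cdot), \Phi_\alpha(\cdot-j)\rangle$ equals $(2\pi)^{-n}$ times the $j$-th Fourier coefficient of the $2\pi\mathbb{Z}^n$-periodic function $\sigma_\alpha(\xi) = \sum_{m\in\mathbb{Z}^n} |\hat\Phi_\alpha(\xi - 2\pi m)|^2$. Plugging in the definition \eqref{scale}, for $\xi\in[-\pi,\pi]^n$ we get
\[
\sigma_\alpha(\xi) = \sum_{m\in\mathbb{Z}^n} \hat\phi_\alpha^2(\xi-2\pi m)\left[\sum_{k\in\mathbb{Z}^n}\hat\phi_\alpha^2(\xi - 2\pi m - 2\pi k)\right]^{-1}.
\]
The bracketed sum is itself $2\pi\mathbb{Z}^n$-periodic in its argument, hence independent of $m$ and equal to $\sum_{k}\hat\phi_\alpha^2(\xi-2\pi k)$; so the quotient telescopes and $\sigma_\alpha(\xi)\equiv 1$. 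Therefore the Fourier coefficients are $\delta_{0,j}$, which is exactly $\langle \Phi_\alpha(\cdot-j),\Phi_\alpha(\cdot-k)\rangle = \delta_{j,k}$. (One should note that (H2) guarantees the denominator is bounded below by $\delta>0$ on $[-\pi,\pi]^n$, and \eqref{bound} together with (H4) guarantees $\hat\Phi_\alpha\in L^2$, so all the interchanges are justified and $\Phi_\alpha\in L^2(\mathbb{R}^n)$.)

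Next I would show $\Phi_\alpha\in V_0(\phi_\alpha)$ and that its translates are complete. From \eqref{scale} and \eqref{fundamental function} we have
\[
\hat\Phi_\alpha(\xi) = \hat L_{\phi_\alpha}(\xi)\cdot \frac{\sum_{j\in\mathbb{Z}^n}\hat\phi_\alpha(\xi-2\pi j)}{\left[\sum_{k\in\mathbb{Z}^n}\hat\phi_\alpha^2(\xi-2\pi k)\right]^{1/2}} =: \hat L_{\phi_\alpha}(\xi)\, m_\alpha(\xi),
\]
where $m_\alpha$ is $2\pi\mathbb{Z}^n$-periodic and, by the Lemma, satisfies $1\le m_\alpha(\xi)\le 1+C$ almost everywhere; in particular $m_\alpha\in L^2([-\pi,\pi]^n)$, so its Fourier series $m_\alpha(\xi)=\sum_j a_j e^{-i\langle\xi,j\rangle}$ has $\{a_j\}\in l^2$, and \eqref{V0} shows $\Phi_\alpha = \sum_j a_j L_{\phi_\alpha}(\cdot - j)\in V_0(\phi_\alpha)$. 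Conversely, since $m_\alpha$ is bounded above and below and periodic, multiplication by $m_\alpha$ (resp. $1/m_\alpha$) is a bounded invertible map on the periodic $L^2$ space, which transfers on the Fourier side to the statement that $f\mapsto$ (coefficient sequence obtained by dividing $\hat f/\hat L_{\phi_\alpha}$ by $m_\alpha$ and expanding) carries $V_0(\phi_\alpha)$ boundedly and invertibly onto $\overline{\mathrm{span}}\{\Phi_\alpha(\cdot-j)\}$. Concretely: given $f = \sum_j a_j L_{\phi_\alpha}(\cdot-j)\in V_0(\phi_\alpha)$, let $b(\xi)$ be the periodic $L^2$ function with Fourier coefficients $\{a_j\}$; then $\hat f = \hat L_{\phi_\alpha}\, b = \hat\Phi_\alpha\,(b/m_\alpha)$, and $b/m_\alpha\in L^2([-\pi,\pi]^n)$ has some $l^2$ Fourier coefficients $\{c_j\}$, whence $f=\sum_j c_j\Phi_\alpha(\cdot-j)$ in $L^2$. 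This shows $V_0(\phi_\alpha)\subseteq\overline{\mathrm{span}}\{\Phi_\alpha(\cdot-j)\}$, and the reverse inclusion is immediate since each $\Phi_\alpha(\cdot-j)\in V_0(\phi_\alpha)$ and the latter is closed (Corollary 1); combined with orthonormality, $\{\Phi_\alpha(\cdot-j)\}$ is a complete orthonormal system in $V_0(\phi_\alpha)$.

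The main obstacle I anticipate is purely technical rather than conceptual: justifying the formal manipulations — the termwise interchange in the periodization of $|\hat\Phi_\alpha|^2$, the membership $\hat\Phi_\alpha\in L^2$, and the legitimacy of writing $\hat\Phi_\alpha=\hat L_{\phi_\alpha} m_\alpha$ pointwise a.e. — in the case where $\hat\phi_\alpha$ blows up at the origin and at the points $2\pi j$. Here one relies on the fact that even though individual terms $\hat\phi_\alpha(\xi-2\pi j)$ may be singular, the denominator $\sum_k\hat\phi_\alpha^2(\xi-2\pi k)$ carries the same singularities, so the quotients $m_\alpha$ and $\hat\Phi_\alpha$ extend to bounded (indeed continuous, with value $1$ at the origin for $\hat\Phi_\alpha$) functions across those points; the set $2\pi\mathbb{Z}^n$ has measure zero and is therefore harmless for the $L^2$ identities. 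Decay at infinity from (H4) handles integrability away from the lattice. Once these regularity points are dispatched, the telescoping computation and the bounded-invertible-multiplier argument finish the proof cleanly.
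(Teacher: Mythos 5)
Your proof is correct and follows essentially the same route as the paper: orthonormality via Plancherel and the periodization of $\hat\Phi_\alpha^2$ summing to $1$, and completeness by writing $\hat f=\hat L_{\phi_\alpha}b=\hat\Phi_\alpha(b/m_\alpha)$ with the multiplier $m_\alpha$ bounded above and below by Lemma 1 (the paper's $Q$ is your $1/m_\alpha$). Your explicit verification that $\Phi_\alpha$ itself lies in $V_0(\phi_\alpha)$ is a small step the paper leaves implicit, but it is not a different approach.
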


\begin{proof}
Observe that
\begin{align*}
&\langle \Phi_\alpha(\cdot-j),\Phi_\alpha(\cdot-k) \rangle = (2\pi)^{-n}\int_{\mathbb{R}^n}\hat \Phi_\alpha^2(\xi)e^{i\langle\xi , k-j \rangle} d\xi \\
=& (2\pi)^{-n} \int_{[-\pi,\pi]^n} \left(\sum_{l\in\mathbb{Z}^n} \hat \Phi_\alpha^2(\xi-2\pi l)  \right) e^{i\langle \xi,k-j \rangle} d\xi \\
=&(2\pi)^{-n} \int_{[-\pi,\pi]^n}   e^{i\langle \xi,k-j \rangle}   d\xi = \delta_{j,k},
\end{align*}
where we have used Plancherel's theorem in the first line.  The Dominated Convergence theorem allows us to switch the order of summation and integration in the second line, and the third line follows from \eqref{scale}.  Thus, orthonormality is established.  Suppose now that $f\in V_0(\phi_\alpha)$, then $\hat f = P\hat L_{\phi_\alpha}$, where $P$ is a $2\pi$ periodic function in $L^2([-\pi,\pi]^n)$.  We note that $\hat L_{\phi_\alpha} = Q \hat \Phi_\alpha$, where $Q$ is the $2\pi$ periodic function in Lemma 1.  Hence, $\hat f = PQ\hat\Phi_\alpha$, and as a result of \eqref{bound}, $PQ\in L^2([-\pi,\pi]^n)$.  This means there is an $l^2$ sequence $\{b_j:j\in\mathbb{Z}^n\}$ such that $f = \displaystyle\sum_{j\in\mathbb{Z}^n}b_j \Phi_\alpha(\cdot -j)  $, which establishes the completeness of $\{\Phi_\alpha(\cdot-j):j\in\mathbb{Z}^n\}$ in $V_0(\phi_\alpha)$.
\end{proof}

For $j\in \mathbb{Z}$, we define
\begin{equation}\label{Vj}
V_j(\phi_\alpha) = \left\{ f\in L^2(\mathbb{R}^n): f\left(2^{-j}(\cdot)\right)\in V_0(\phi_\alpha)    \right\},
\end{equation}
and
\[
V(\phi_\alpha) = \{ V_j(\phi_\alpha): j\in\mathbb{Z}   \}.
\]

Now $V(\phi_\alpha)$ is our MRA candidate, before proving our main result, we make note of the following propositions which may be found in \cite{HW}.

\begin{prop}( \cite{HW} page 45, Theorem 1.6)
Conditions 1, 2, and 5 in the definition of MRA imply condition 3.
\end{prop}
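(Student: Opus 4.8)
The plan is the classical one (essentially the argument in \cite{HW}): show that the only element of $\bigcap_{j\in\mathbb{Z}}V_j$ is $0$. Let $P_j$ denote the orthogonal projection of $L^2(\mathbb{R}^n)$ onto the closed subspace $V_j$. If $f\in\bigcap_{j}V_j$, then $P_jf=f$ for every $j$, so $\|f\|_{L^2}=\|P_jf\|_{L^2}$ for all $j\in\mathbb{Z}$; hence it suffices to prove that $\|P_jf\|_{L^2}\to 0$ as $j\to-\infty$.

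First I would record the relevant bases. Let $\varphi$ be the generator furnished by condition 5. Since the map $f\mapsto 2^{jn/2}f(2^j\cdot)$ is a surjective isometry of $V_0$ onto $V_j$ (this is condition 2, iterated), the family $\{\varphi_{j,k}:=2^{jn/2}\varphi(2^j\cdot-k):k\in\mathbb{Z}^n\}$ is an orthonormal basis of $V_j$ for every $j\in\mathbb{Z}$, and therefore $\|P_jf\|_{L^2}^2=\sum_{k\in\mathbb{Z}^n}|\langle f,\varphi_{j,k}\rangle|^2$. Next I would reduce to a dense subclass: given $\varepsilon>0$, pick $g\in C_c(\mathbb{R}^n)$ with $\mathrm{supp}\,g\subset B_R$ and $\|f-g\|_{L^2}<\varepsilon$; since $P_j$ is a contraction, $\|P_jf\|_{L^2}\le\|f-g\|_{L^2}+\|P_jg\|_{L^2}<\varepsilon+\|P_jg\|_{L^2}$, so it is enough to show $\|P_jg\|_{L^2}\to0$.

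The core of the argument is the estimate of $\|P_jg\|_{L^2}$. Applying the Cauchy--Schwarz inequality to each coefficient $\langle g,\varphi_{j,k}\rangle=\int_{B_R}g\,\overline{\varphi_{j,k}}$, summing in $k$ with Tonelli's theorem, and substituting $u=2^jx$ yields
\[
\|P_jg\|_{L^2}^2\le\|g\|_{L^2}^2\int_{B_R}2^{jn}\sum_{k\in\mathbb{Z}^n}|\varphi(2^jx-k)|^2\,dx=\|g\|_{L^2}^2\int_{B_{2^jR}}\Theta(u)\,du,
\]
where $\Theta(u)=\sum_{k\in\mathbb{Z}^n}|\varphi(u-k)|^2$. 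The key observation is that $\Theta$ is $\mathbb{Z}^n$-periodic and locally integrable, because $\int_{[0,1]^n}\Theta=\|\varphi\|_{L^2}^2<\infty$; hence $\int_{B_{2^jR}}\Theta\to0$ as $j\to-\infty$ by absolute continuity of the Lebesgue integral. Thus $\|P_jg\|_{L^2}\to0$, so $\|f\|_{L^2}\le\varepsilon$, and as $\varepsilon>0$ was arbitrary, $f=0$.

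I expect the only real obstacle to be this last quantitative step. A naive Fourier-side computation only gives $\|P_jf\|_{L^2}^2\le C\|f\|_{L^2}^2$ with no decay, which is useless; the trick is to localize $g$ to a ball and to recognize the periodization $\Theta=\sum_k|\varphi(\cdot-k)|^2$ as an $L^1_{\mathrm{loc}}$ function, which converts ``$V_j$ shrinks as $j\to-\infty$'' into the genuine decay $\int_{B_{2^jR}}\Theta\to0$. Note that condition 1 plays essentially no role beyond making the $V_j$ a coherent nested family; the substantive inputs are condition 2, which supplies the dilated orthonormal bases, and condition 5, which supplies the orthonormal generator.
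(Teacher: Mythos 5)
Your argument is correct, and it is essentially the proof the paper is invoking: the paper does not prove this proposition itself but cites Theorem 1.6 of Hern\'andez--Weiss, whose argument is exactly your scheme of approximating $f\in\bigcap_j V_j$ by a compactly supported $g$, expanding $P_jg$ in the dilated orthonormal bases $\{2^{jn/2}\varphi(2^j\cdot-k)\}$, and using the local integrability of the periodization $\Theta=\sum_k|\varphi(\cdot-k)|^2$ to force $\|P_jg\|\to0$ as $j\to-\infty$. Your closing remark that condition 1 is not really needed also agrees with the standard treatment.
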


\begin{prop}( \cite{HW} page 46, Theorem 1.7)
Let $\{V_j:j\in\mathbb{Z}\}$ be a sequence of closed subspaces of $L^2(\mathbb{R}^n)$ satisfying conditions 1, 2, and 5 in the definition of MRA, assume that the scaling function $\varphi$ of condition 5 is such that $|\hat\varphi|$ is continuous at 0.  Then the following two conditions are equivalent:
\begin{itemize}
\item $\hat\varphi(0)\neq 0$, 
\item $\displaystyle \bigcup_{j\in\mathbb{Z}}V_j$ is dense in $L^2(\mathbb{R}^n)$.
\end{itemize}
Moreover, when either is the case, $|\hat\varphi(0)|=1$.
\end{prop}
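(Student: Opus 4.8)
Since the statement is Theorem 1.7 of \cite{HW}, the plan is to reconstruct its proof: the heart of the matter is an explicit formula for the orthogonal projection onto each $V_j$, after which the conclusion follows from a short spectral observation about the limiting projection. Write $P_j$ for the orthogonal projection of $L^2(\mathbb R^n)$ onto $V_j$. From condition 5 and the dilation relation in condition 2 one checks that $\{2^{jn/2}\varphi(2^j\cdot-k):k\in\mathbb Z^n\}$ is an orthonormal basis of $V_j$; condition 1 gives $P_jP_{j+1}=P_j$, so $\|P_jf\|$ is nondecreasing in $j$ and $P_jf\to P_\infty f$ in $L^2(\mathbb R^n)$, where $P_\infty$ is the orthogonal projection onto $\overline{\bigcup_{j}V_j}$. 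Thus density of $\bigcup_jV_j$ is the same as $P_\infty=I$, and the goal becomes to show $\hat\varphi(0)\neq0\iff P_\infty=I$, with $|\hat\varphi(0)|=1$ in that case.

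First I would compute $\|P_jf\|^2$ for $f$ in the dense subspace $D=\{f\in L^2(\mathbb R^n):\hat f\text{ has compact support}\}$. Expanding $P_jf$ in the above basis, applying Plancherel \eqref{inner product}, substituting $\xi\mapsto 2^{-j}\xi$, periodizing over $[-\pi,\pi]^n$ and using Parseval's identity for Fourier series gives
\[
\|P_jf\|^2=(2\pi)^{-n}2^{jn}\int_{[-\pi,\pi]^n}\Bigl|\,\sum_{l\in\mathbb Z^n}\hat f\bigl(2^j(\eta-2\pi l)\bigr)\,\overline{\hat\varphi(\eta-2\pi l)}\,\Bigr|^2\,d\eta .
\]
If $\operatorname{supp}\hat f\subset B(0,R)$ then, since $|\eta-2\pi l|_\infty\geq\pi$ for $\eta\in[-\pi,\pi]^n$ and $l\neq 0$, for all $j$ with $2^j\pi\geq R$ only the term $l=0$ contributes; substituting back $\xi=2^j\eta$ and using $\operatorname{supp}\hat f\subset 2^j[-\pi,\pi]^n$ turns the right-hand side into $(2\pi)^{-n}\int_{\mathbb R^n}|\hat f(\xi)|^2\,|\hat\varphi(2^{-j}\xi)|^2\,d\xi$. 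Because $|\hat\varphi|$ is continuous at the origin it is bounded on a neighbourhood of $0$, so dominated convergence yields $\lim_{j\to\infty}\|P_jf\|^2=|\hat\varphi(0)|^2(2\pi)^{-n}\|\hat f\|^2=|\hat\varphi(0)|^2\|f\|^2$ for every $f\in D$.

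Comparing this with $P_jf\to P_\infty f$ gives $\|P_\infty f\|^2=|\hat\varphi(0)|^2\|f\|^2$ on $D$, and since $D$ is dense and both sides depend continuously on $f$ the identity holds for all $f\in L^2(\mathbb R^n)$. Hence $\langle P_\infty f,f\rangle=\|P_\infty f\|^2=|\hat\varphi(0)|^2\langle f,f\rangle$ for every $f$, so $P_\infty=|\hat\varphi(0)|^2I$; the idempotence $P_\infty^2=P_\infty$ then forces $|\hat\varphi(0)|\in\{0,1\}$. Finally, $|\hat\varphi(0)|=0$ gives $P_\infty=0$ (so $\bigcup_jV_j$ is not dense, as $L^2(\mathbb R^n)\neq\{0\}$) while $|\hat\varphi(0)|=1$ gives $P_\infty=I$ (so $\bigcup_jV_j$ is dense); this is precisely the asserted equivalence, together with the supplementary claim $|\hat\varphi(0)|=1$ whenever either condition holds.

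The step I expect to be the main obstacle is the projection computation: carrying the Plancherel and Fourier-series constants correctly through the dilation and periodization, and, more importantly, justifying that for $j$ large the periodized sum collapses to its $l=0$ term uniformly in $\eta\in[-\pi,\pi]^n$. This is exactly where the compact support of $\hat f$, hence the restriction to the dense subclass $D$, is indispensable; everything afterward is soft functional analysis.
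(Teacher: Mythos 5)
Your argument is correct and is essentially the standard proof of this result: the paper itself gives no proof (it simply imports Theorem 1.7 from \cite{HW}), and the textbook argument rests on exactly your key computation, namely that for $\hat f$ compactly supported $\|P_jf\|^2=(2\pi)^{-n}\int_{\mathbb{R}^n}|\hat f(\xi)|^2|\hat\varphi(2^{-j}\xi)|^2\,d\xi$ for large $j$, whence $\lim_j\|P_jf\|^2=|\hat\varphi(0)|^2\|f\|^2$. Your endgame via $P_\infty=|\hat\varphi(0)|^2I$ and idempotence is a harmless repackaging of the two separate implications (and the Parseval step is legitimate for large $j$, since continuity of $|\hat\varphi|$ at $0$ makes it bounded near $0$ where the periodized sum collapses), so the proposal stands as a faithful reconstruction of the cited proof.
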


We may now prove our main result.

\begin{thm}
Let $\{\phi_\alpha:\alpha\in A\}$ be a regular family of cardinal interpolators, which satisfies the condition
\[
\lim_{\xi\to\infty}\hat\phi_\alpha(\xi)=0, \qquad \text{for all } \alpha\in A.
\]
Then $V(\phi_\alpha)$ is an MRA.
\end{thm}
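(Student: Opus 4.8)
The plan is to verify the five axioms of an MRA for $V(\phi_\alpha)$, using Propositions 3 and 4 to reduce the work to axioms 1, 2, and 5; axiom 3 then comes from Proposition 3 and axiom 4 from Proposition 4. The candidate scaling function is $\varphi=\Phi_\alpha$ from \eqref{scale}, and I will use throughout (as in the construction of $\Phi_\alpha$) that $\hat\phi_\alpha$ is unbounded at the origin, so that $\hat\Phi_\alpha$ extends continuously there with $\hat\Phi_\alpha(0)=1$. Axioms 2 and 5, and the closedness of each subspace, are essentially already in hand: Corollary 1 gives that $V_0(\phi_\alpha)$ is closed and Proposition 2 gives that $\{\Phi_\alpha(\cdot-j):j\in\mathbb{Z}^n\}$ is an orthonormal basis for it, which is axiom 5; since each $V_j(\phi_\alpha)$ is the image of $V_0(\phi_\alpha)$ under the unitary dilation $f\mapsto 2^{jn/2}f(2^j(\cdot))$, every $V_j(\phi_\alpha)$ is closed with orthonormal basis $\{2^{jn/2}\Phi_\alpha(2^j(\cdot)-k):k\in\mathbb{Z}^n\}$; and axiom 2 is read off \eqref{Vj}, since for $g(x)=f(2x)$ one has $g(2^{-(j+1)}x)=f(2^{-j}x)$, so $g\in V_{j+1}(\phi_\alpha)$ iff $f(2^{-j}(\cdot))\in V_0(\phi_\alpha)$, i.e.\ $f\in V_j(\phi_\alpha)$.

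The real content is axiom 1, and by axiom 2 it reduces to $V_0(\phi_\alpha)\subseteq V_1(\phi_\alpha)$. Both spaces are closed and invariant under translation by $\mathbb{Z}^n$ (for $V_1(\phi_\alpha)$ because $V_1(\phi_\alpha)=\{f:f(\tfrac{x}{2})\in V_0(\phi_\alpha)\}$ and $V_0(\phi_\alpha)$ is translation invariant), so it suffices to put the single generator $\Phi_\alpha$ into $V_1(\phi_\alpha)$: then $V_1(\phi_\alpha)$ is a closed subspace containing all of $\{\Phi_\alpha(\cdot-j):j\in\mathbb{Z}^n\}$, hence all of $V_0(\phi_\alpha)$. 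By the Fourier description of $V_0(\phi_\alpha)$ from the proof of Proposition 2, $\Phi_\alpha\in V_1(\phi_\alpha)$ is equivalent to the existence of a $2\pi$-periodic refinement mask $m_0$ with
\[
\hat\Phi_\alpha(2\xi)=m_0(\xi)\,\hat\Phi_\alpha(\xi),\qquad \xi\in\mathbb{R}^n,
\]
together with $m_0\in L^2([-\pi,\pi]^n)$. The $L^2$ membership is routine once $m_0$ is known to be periodic: \eqref{bound} and (R2) give $\hat\Phi_\alpha\ge(1+C)^{-1}$ on $[-\pi,\pi]^n$, so on a period $|m_0(\xi)|^2\le(1+C)^2|\hat\Phi_\alpha(2\xi)|^2$, which is integrable because $\hat\Phi_\alpha\in L^2(\mathbb{R}^n)$.

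I expect the periodicity of $m_0=\hat\Phi_\alpha(2(\cdot))/\hat\Phi_\alpha$ to be the main obstacle. Expanding with \eqref{scale}, one must show that
\[
\frac{\hat\phi_\alpha(2\xi)}{\hat\phi_\alpha(\xi)}\,\Big[\sum_{k\in\mathbb{Z}^n}\hat\phi_\alpha^2(\xi-2\pi k)\Big]^{1/2}\Big[\sum_{k\in\mathbb{Z}^n}\hat\phi_\alpha^2(2\xi-2\pi k)\Big]^{-1/2}
\]
is $2\pi$-periodic. The two bracketed factors can be handled by splitting the sums over $\mathbb{Z}^n$ according to the parity of the coordinates and using that $\sum_k\hat\phi_\alpha^2(2\xi-2\pi k)$ is $\pi$-periodic; the quotient $\hat\phi_\alpha(2\xi)/\hat\phi_\alpha(\xi)$ is where the hypotheses have to be spent, the decay $\hat\phi_\alpha(\xi)\to0$ as $\xi\to\infty$ (with the rate in (H4)) to justify the rearrangements and the blow-up of $\hat\phi_\alpha$ at the origin to control the quotient near $0$. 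For a homogeneous member of the family, such as a polyharmonic spline with $\hat\phi_\alpha(\xi)=c\|\xi\|^{-s}$, this quotient is the constant $2^{-s}$ and the periodicity is transparent, recovering the setting of \cite{Madych}.

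With axioms 1, 2, and 5 established, axiom 3 is precisely Proposition 3. For axiom 4, Proposition 4 applies with $\varphi=\Phi_\alpha$: $|\hat\Phi_\alpha|$ is continuous at $0$ (by (H3) away from the origin and the continuous extension at the origin described after \eqref{scale}) and $\hat\Phi_\alpha(0)=1\ne0$, so $\bigcup_j V_j(\phi_\alpha)$ is dense in $L^2(\mathbb{R}^n)$. Therefore $V(\phi_\alpha)$ satisfies all five MRA axioms.
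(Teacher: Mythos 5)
Your treatment of axioms 2, 3, 4 and 5 is sound and coincides with the paper's: Corollary 1 and Proposition 2 give closedness and the orthonormal basis (axiom 5), dilation gives axiom 2, Proposition 3 gives axiom 3, and Proposition 4 together with $\hat\Phi_\alpha(0)=1$ gives axiom 4. The genuine gap is exactly where you flag it: axiom 1 is never proved. Your reduction of $V_0(\phi_\alpha)\subseteq V_1(\phi_\alpha)$ to a refinement equation $\hat\Phi_\alpha(2\xi)=m_0(\xi)\hat\Phi_\alpha(\xi)$ with $m_0$ a $2\pi$-periodic function is correct, and since $\hat\phi_\alpha>0$ almost everywhere for the families in question, $m_0$ is forced to be the quotient $\hat\Phi_\alpha(2\cdot)/\hat\Phi_\alpha$; thus the periodicity you call ``the main obstacle'' is not a technicality to be absorbed by decay estimates, it is equivalent to the nesting. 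Moreover it does not follow from (H1)--(H5), (R1)--(R2) and the blow-up of $\hat\phi_\alpha$ at the origin. The bracketed factors in your expression are automatically $2\pi$-periodic (indeed $\sum_k\hat\phi_\alpha^2(2\xi-2\pi k)$ is $\pi$-periodic), so everything hinges on $\hat\phi_\alpha(2\xi)/\hat\phi_\alpha(\xi)$ being $2\pi$-periodic; and for the paper's own multiquadric examples, where $\hat\phi_\alpha$ is a constant multiple of $\|\xi\|^{-\alpha-n/2}K_{\alpha+n/2}(c\|\xi\|)$ and decays exponentially, this quotient tends to $0$ at infinity and is certainly not $2\pi$-periodic. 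Hence no amount of ``spending the hypotheses'' on (H4) and the singularity at the origin can close this step: it goes through when $\hat\phi_\alpha$ is homogeneous (the polyharmonic case of \cite{Madych}), but not in the stated generality.

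For comparison, the paper's proof disposes of this point by asserting that Conditions 1 and 2 ``follow immediately from \eqref{Vj}'' and concentrates on Conditions 3--5; refinability is never addressed there. So you have correctly isolated the one nontrivial condition --- in that respect your write-up is more careful than the published argument --- but as a proof of the theorem your proposal is incomplete at precisely that point, and the missing periodicity of the mask would have to be added as a hypothesis (or derived from extra structure such as homogeneity of $\hat\phi_\alpha$) rather than deduced from the axioms of a regular family of cardinal interpolators.
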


\begin{proof}
Conditions 1 and 2 follow immediately from \eqref{Vj}, while Proposition 2 shows that Condition 5 is satisfied.  Condition 3 now follows from Proposition 3.  To see the Condition 4 holds, we note that the limit condition implies $\hat\Phi_\alpha(0)=1$, thus we may apply Proposition 4, which finishes the proof.
\end{proof}


\section{Examples}

We present three examples of this phenomenon.  The details pertaining to regular families may be found in Section 6 of \cite{me}.

\subsection{Polyharmonic Cardinal Splines}

Our first example was explored in \cite{Madych}, and motivated this treatment.  A function or distribution $f$ is called $k$-$harmonic$, where $k\in\mathbb{N}$, if it satisfies
\begin{equation}\label{kharm}
\Delta^k f =0  \qquad\text{ on }\mathbb{R}^n,
\end{equation}    
where $\Delta$ is the Laplacian operator and  $\Delta^k( f)=\Delta(\Delta^{k-1} f)$.  A function which satisfies \eqref{kharm} with $k\geq 1$ is called \emph{polyharmonic}.  The fundamental solution of $\eqref{kharm}$ has Fourier transform given by $\hat{\phi}_k(\xi)=(2\pi)^{-n}\| \xi \|^{-2k}$, where $\|\xi\|=\sqrt{\langle \xi,\xi  \rangle}$.

A \emph{polyharmonic cardinal spline} is a function or distribution on $\mathbb{R}$ which satisfies
\begin{enumerate}
\item[(i)] $f\in C^{2k-2}(\mathbb{R}^n)$,
\item[(ii)] $\Delta^k f =0 \quad\text{on }\mathbb{R}^n\setminus\mathbb{Z}^n$.
\end{enumerate}
The family $\{\phi_k:k\in\mathbb{N}, 2k>n\}$, is a regular family of cardinal interpolators which obviously satisfies the additional hypothesis of our theorem.  Hence, for every $k\in\mathbb{N}$, $V_0(\phi_k)$ is an MRA.

\subsection{Multiquadrics I}
Our next example is a family of multiquadrics whose order is allowed to vary.  By $\alpha^{th}$ order multiquadric we mean $\phi_\alpha(x)=(\| x \|^2+c^2)^\alpha$, where $c>0$ is fixed and known as the shape parameter.  We consider the family $\{\phi_{\alpha_j}:j\in\mathbb{N}\}$, where $\{\alpha_j:j\in\mathbb{N}\}\subset [1/2,\infty)$ such that dist$\left(\{\alpha_j\},\mathbb{N}\right)>0$ and $\displaystyle\lim_{j\to\infty}\alpha_j=\infty$.  To see that the additional hypothesis in the theorem is met we note that $\hat \phi_\alpha$ is a constant multiple of $\| \cdot \|^{-\alpha - n/2 }K_{\alpha+n/2}(c\| \cdot \|)$, where $K_\beta$ is the modified Bessel function of the second kind.  Thus $\hat\phi_\alpha$ is clearly unbounded at the origin.
Before continuing, we mention the special case that $\alpha_j=j-1/2$, this family may be regarded as a family of smoothed out polyharmonic splines.

\subsection{Multiquadrics II}
Our last example is a again a family of multiquadrics, but now we fix the order and allow the shape parameter to vary.  That is, $\phi_c(x)=(\| x\|^2+c^2)^\alpha$ now we fix $\alpha\in [1/2, \infty)\setminus \mathbb{N}$ and consider the family $\{\phi_c: c\geq 1\}$.  This too is a regular family of cardinal interpolators which satisfies the additional hypothesis that $\hat\phi_c$ is unbounded at the origin. 

\subsection{Remarks}
We close by mentioning that some extra hypothesis on a regular family of cardinal interpolators is needed.  To see this, consider the family of Gaussians $\{e^{-\|x\|^2/(4\alpha)}:\alpha\geq 1\}$.  It is shown in \cite{me} that this is a regular family of cardinal interpolators.  The corresponding function $\Phi_\alpha$ clearly satisfies $\hat\Phi_\alpha(0)<1$.  Thus Proposition 4 shows that $\displaystyle\bigcup_{j\in\mathbb{Z}}V_j(e^{-\|x\|^2/(4\alpha)})$ cannot be dense in $L^2(\mathbb{R}^n)$.


\end{document}